\newtheorem{theorem}{Theorem}[section]
\newtheorem{definition}[theorem]{Definition}
\title[Non-autonomous problem for a $2m$-th order  nonlocal parabolic equation]{Non-autonomous problem for a $2m$-th order semilinear nonlocal parabolic equation}
\author[F. D. M. Bezerra]{Flank D. M. Bezerra}
\address{
Departamento de Matem\'atica\\
Universidade Federal da Para\'iba\\
58051-900 Jo\~ao Pessoa - PB\\
Brazil}
\email{flank@mat.ufpb.br}
\thanks{This work was completed with the support of the grant PID2023-149509N funded by the Ministry of Science, Innovation and Universities of Spain and CNPq/Finance Code \# 303039/2021-3, Brazil funded by the CNPQ of Brazil.
}
\author[S. Sastre-G\'omez]{Silvia Sastre-G\'omez}
\address{Dpto. E.D.A.N.\\
Facultad de Matemáticas\\
Universidad de Sevilla\\
Campus de Reina Mercedes\\
41012 SEVILLA\\
Spain}
\email{ssastre@us.es}
\begin{document}

\maketitle

\begin{abstract}
In this paper we consider a $2m$-th order non autonomous quasilinear parabolic equation. Under suitable conditions of growth and regularity for the nonlinear functions present in the model, we prove a result of existence and characterization of pullback attractors. Moreover, we consider an autonomous version from the  $2m$-th order non autonomous quasilinear parabolic equation in question.

\vskip .1 in \noindent {\it Mathematical Subject Classification 2020:} 35J62; 35K59; 37C60; 35B41; 35J35.
\newline {\it Key words and phrases:}  autonomous quasilinear evolution equation; comparison results; quasilinear evolution equation; non autonomous quasilinear evolution equation; pullback attractors.

\end{abstract}

\tableofcontents

\section{Introduction}

Let $\Omega\subset\mathbb{R}^N$ be a bounded domain of class $\mathcal{C}^{2m+\mu}$, where $N>2m$, $m\in\mathbb{N}$ and $\mu\in(0,1)$. For scalar function $u=u(x,t)$ defined on $\Omega\times(0,T)$ we denote by $D^\alpha u$ its partial derivative $D_1^{\alpha_1}\ldots D^{\alpha_n}_nu$, where $D_i= \partial_{x_{i}}$, $\alpha=(\alpha_1,\ldots,\alpha_n)$ is multi-indice and $|\alpha|=|\alpha_1|+\ldots+|\alpha_n|$. Inspired by the models with Kirchhoff type diffusion treated in \cite{BCV,CLM-LM,CVV} and \cite{TDT}, we consider the non autonomous quasilinear  evolution 
 equation with $2m$-th order ($m>1$) elliptic operator
\begin{equation}\label{Eq01Local}
\partial_tu(x,t)+a\Big(\int_\Omega ((A+\lambda_0I)u(x,t))u(x,t)dx,t \Big)(A+\lambda_0I)u(x,t)=f(x,t,u),
\end{equation}
with $x\in\Omega$, $t>s$, $s\in\mathbb{R}$, $u=u(x,t)$, where $A$ denotes the differential operator given by
\[
Au=\displaystyle\sum_{|\alpha|,|\beta|\leqslant m} (-1)^{|\beta|}D^\beta(a_{\alpha,\beta}(x)D^\alpha u),
\]
which is a $2m$-th order strongly elliptic differential operator. All the coefficients $a_{\alpha,\beta}$ are of class  $C^{4m+|\beta|+\mu}(\Omega)$ and bounded in $\Omega$ by a constant $c_{up}<0$.

Together with \eqref{Eq01Local}, we consider the initial-boundary conditions
\begin{equation}\label{Condd01}
\begin{cases}
u(x,s)=u_s(x)& x\in\Omega,\ s\in\mathbb{R},\\
B_0u(x,t)=\ldots=B_{m-1}u(x,t)=0& x\in\partial\Omega,\ t>s,
\end{cases}
\end{equation}
where $B_0,\ldots,B_{m-1}$ are linear and time independent boundary operators.


The triple $(-A,\{B_j\},\Omega)$ forms a ‘‘regular elliptic boundary value problem'', according to \cite{Ch0,Ch}, \cite[Definition 1.2.1]{ChD}, \cite[Page 76]{AF}, \cite[Page 125]{LM} and \cite{JN}. According to \cite{AF}, it satisfies the ‘‘root condition'', the ‘‘smoothness condition'', certain ‘‘complementary condition'' and the system $\{B_j\}$ is normal. In particular, we need our differential operator $A$ to be uniformly strongly elliptic in $\Omega$; that is, we assume that for some $C>0$
\[
\displaystyle\sum_{|\alpha|,|\beta|=m}a_{\alpha,\beta}(x)\xi^\alpha\xi^\beta\geqslant C|\xi|^{2m},
\]
for any $ x\in\Omega$, $\xi\in\mathbb{R}^N$.

Then  the unbounded linear operator $(A,D(A))$ acting in $L^2(\Omega)$  with domain  
\[
D(A)=\{u\in H^{2m}(\Omega); B_0u=\ldots=B_{m-1}u=0\ \mbox{on}\ \partial\Omega\},
\]
which is the close subspace of the space $H^{2m}(\Omega)$ consisting of functions satisfying, in the sense of traces, boundary conditions $B_j$.  The bilinear form $p_A$ connected to the operator $A$, given by
\[
p_A(u,v)=\displaystyle\sum_{|\alpha|,|\beta|\leqslant m} a_{\alpha,\beta}(x) D^\alpha u D^\beta v,
\]
satisfies the following conditions:

\noindent i) Coerciveness inequality
\begin{equation}\label{coerciveness}
\int_\Omega p_A(w,w)dx+C_2\|w\|_{L^2(\Omega)}^2\geqslant C_1\|w\|^2_{H^{m}(\Omega)}
\end{equation}
for any $w\in H^{m}_0(\Omega)$. 

\noindent ii) Green's Identity
\[
\int_\Omega (Aw)wdx=\int_\Omega p_A(w,w)dx.
\]
for any $w\in H^{m}_0(\Omega)$. 

Therefore, $(A+\omega I,D(A+\omega I))$ with domain $D(A+\omega I)=D(A)$  is a sectorial operator in $L^2(\Omega)$ for each $\omega\in\mathbb{R}$, according to \cite[Example 1.3.8]{ChD}. In particular,  $\lambda_0>0$ may be chosen in \eqref{Eq01Local}, for which  $A+\lambda_0I$ becomes positive in $L^2(\Omega)$, consequently the spectrum of $A+\lambda_0I$ is a point spectrum consisting of eigenvalues $0<\mu_1<\mu_2<\cdots<\mu_{n-1}<\mu_n<\cdots$, with $\lim_{n\to\infty}\mu_n=\infty$, where $\{\mu_n\}_n$ denotes the ordered sequence of eigenvalues of $A+\lambda_0I$,  including their multiplicity.

Let $X^\alpha$ be the fractional power spaces associated to the operator $A$, with $0\leqslant\alpha\leqslant1$. In particular
\[
X^0=L^2(\Omega),
\]
\[
X^{\frac12}=H^{m}_0(\Omega),
\]
and
\[
X^1=D(A)=\{u\in H^{2m}(\Omega); B_0u=\ldots=B_{m-1}u=0\ \mbox{on}\ \partial\Omega\}.
\]

We assume that $a:[0,\infty)\times\mathbb{R}\to[a_0,a_1]$  is  a locally Lipschitz function in the first variable and continuous in the second variable, where $0<a_0<a_1$.

Here, an important fact,  is that the term $a(\int_\Omega (Au)udx,t )$ is non-linear and non-local and its presence in \eqref{Eq01Local} jointly with $f=f(x,t,u)$ turns also 
equation \eqref{Eq01Local} non-autonomous, and all this leading to several interesting and non-trivial mathematical questions about existence of solutions and asymptotic behavior of those solutions. To the best of our knowledge, these models have not yet been addressed in the literature.

On the nonlinearity $f$, let $f:\Omega\times\mathbb{R}\times\mathbb{R}\to \mathbb{R}$ be measurable in the first variable and locally Lipschitz continuous in the second and third variables, uniformly for $x\in\overline{\Omega}$. We assume that there exist $f_0,f_1:\mathbb{R}\to \mathbb{R}$  continuously differentiable such that 
\[
b_0f_0(u)\leqslant f(x,t,u)\leqslant b_1f_1(u),
\]
for any $x\in\Omega$, $t\in\mathbb{R}$, $u\in\mathbb{R}$, for some constants $ b_0<b_1$. Moreover,  there exists a constant $c>0$ such that, for all $u,v\in\mathbb{R}$
\begin{equation}\label{CCondf13333f_i}
|f_i(u)-f_i(v)|\leqslant c|u-v|(|u|^{\rho-1}+|v|^{\rho-1}+1)
\end{equation}
with exponent $\rho$, such that 
\[
1\leqslant\rho\leqslant\dfrac{2N}{N-2m}\quad  \mbox{ with } ~N>2m.
\]
We also assume that
\begin{equation}\label{CCCCCondf2f_i}
uf_i(u)\leqslant -C_0u^2+C_1 u \qquad \mbox{ with } i=0,1,
\end{equation}
 for some $C_0$ such that the first eigenvalue $\lambda_1$ of $A+C_0I$ is positive, and $C_1\geqslant0$.

Moreover, we assume that there exists a constant $c>0$ such that, for all  $x\in\Omega$, $t,s\in\mathbb{R}$ and $u,v\in\mathbb{R}$
\begin{equation}\label{CCondf1}
|f(x,t,u)-f(x,t,v)|\leqslant c|u-v|(|u|^{\rho-1}+|v|^{\rho-1}+1)
\end{equation}
and
\begin{equation}\label{CCondf2}
|f(x,t,u)-f(x,s,u)|\leqslant c|t-s|(|u|^{\rho-1}+1)
\end{equation}
with exponent $\rho$, such that 
\[
1\leqslant\rho\leqslant\dfrac{2N}{N-2m}\quad (N>2m).
\]

Moreover, we also assume that there exist $C_0\in\mathbb{R}$ and $C_1\geqslant0$ such that
\begin{equation}\label{CCCCCondf2}
uf(x,t,u)\leqslant -C_0u^2+C_1 u
\end{equation}
for any $t,u\in\mathbb{R}$ and $x\in\overline{\Omega}$.  More precisely, we assume that \eqref{CCCCCondf2} holds for some $C_0$ such that the first eigenvalue $\lambda_1$ of $A+C_0I$ is positive, as in \eqref{CCCCCondf2f_i}.

The study of the inner structure of  attractors for  semilinear parabolic problems with
local diffusivity has developed considerably and many very interesting results are available
in the literatures, see e.g. \cite{CLR,CLM-LM,SZ} and references therein). The description
of the inner structure for non-local models is much less exploited. Our aim is to unravel the dynamics of the Kirchhoff diffusive type model \eqref{Eq01Local}.

The article is organized as follows. In Section \ref{ComRes} we present comparison results for the problem \eqref{Eq01Local}-\eqref{Condd01} which will allow us to deduce the global solubility of the problem, as well as existence and a result of characterisation of the pullback attractor in $H_0^m(\Omega)$. We also prove, 
using comparison results, the existence of non autonomous equilibrium for \eqref{Eq01Local}-\eqref{Condd01}. Finally, in Section \ref{AutEquat} we prove existence, uniqueness, non triviality and positivity of stationary solution for an autonomous version of \eqref{Eq01Local} in $H_0^m(\Omega)$.

\section{Pullback dynamics}\label{ComRes}
In this section, we simplify the problem by applying a change of variables in time.  Assume that $u$ is a solution of  \eqref{Eq01Local}. We make a change in the variable $t $ as follows 
\[
\phi(t)=\int_0^t a\Big(\int_\Omega  ((A+\lambda_0I)u(x,\sigma))u(x,\sigma) dx,\sigma\Big)d\sigma,
\]
and $w(x,\phi(t))=u(x,t)$, then $w$ satisfies the following equation 
\begin{equation}\label{Eqqqqq}
\partial_tw+(A+\lambda_0I)w=\dfrac{1}{a\Big(\int_\Omega ((A+\lambda_0I)w)wdx,t\Big)}f(x,\phi^{-1}(t),w),
\end{equation}
with $x\in\Omega$, $t>s$, $s\in\mathbb{R}$, subject to the initial-boundary conditions
\begin{equation}\label{Condd02}
\begin{cases}
w(x,\phi(s))=w_s(x)& x\in\Omega,\ s\in\mathbb{R},\\
B_0w(x,\phi(t))=\ldots=B_{m-1}w(x,\phi(t))=0& x\in\partial\Omega,\ t>s,
\end{cases}
\end{equation}
where $w_s(x)=u(x,s)$.
\begin{definition}\label{MildSol01}
We say that $w:[s,s+\tau]\to X^{\frac12}$ is a mild solution of \eqref{Eqqqqq}-\eqref{Condd02} if $w\in C([s,s+\tau],X^{\frac12})$ and $w(t)$ satisfies the variation of constants formula
\[
\begin{split}
&w(t)=e^{-(A+\lambda_0I)(t-s)}w_s\\
&+\int_s^t e^{-(A+\lambda_0I)(t-\xi)} \dfrac{1}{a(\int_\Omega ((A+\lambda_0I)w(\xi))w(\xi) dx,\xi)}f(x,\phi^{-1}(\xi),w(\xi)) d\xi.
\end{split}
\]
\end{definition}

The initial-boundary value problem \eqref{Eqqqqq}-\eqref{Condd02} is locally well-posed in the sense of Definition \ref{MildSol01} and the solutions are jointly continuous with respect
time and initial conditions in $H_0^m(\Omega)$, see \cite{CLR}. By using the comparison result 
we can also prove that the solutions are globally defined in $H_0^m(\Omega)$, see e.g. \cite[Section 6.10]{CLR}. Hence, we can define the solution
operators in the following way: if $w(t,s,w_s)$ is the solution of \eqref{Eqqqqq}-\eqref{Condd02} we write $S(t,s)w_s=w(t,s,w_s)$ and this defines a solution operator family $\{S(t,s);t\geqslant s\in\mathbb{R}\}\subset C(H_0^m(\Omega))$ associated to \eqref{Eqqqqq}-\eqref{Condd02}.

\subsection{Comparison results}

Note that $H_0^m(\Omega)$ is an ordered Banach space with the following partial ordering
structure
\[
u\geqslant v\ \mbox{in}\ H_0^m(\Omega)\ \Longleftrightarrow u(x)\geqslant v(x)\ \mbox{a.e. for}\ x\in\Omega.
\]

Using that order, we can define the positive cone as
\[
\mathcal{C}=\{u\in H_0^m(\Omega);\ u\geqslant 0\}.
\]

Consider the auxiliary  initial-boundary value problem
\begin{equation}\label{AuxProb1}
\begin{cases}
\partial_tz_i(x,t)+(A+\lambda_0I)z_i(x,t)=\dfrac{b_i}{a_j}f_0(z_i(x,t)) &\ x\in\Omega,\ t>0,
\\
z_i(x,0)=z^i_0(x) &\ x\in\Omega,
\\
B_1z_i(x,t)=B_2z_i(x,t)=\ldots=B_mz_i(x,t)=0 &\ x\in\partial\Omega,\ t>0,

\end{cases}
\end{equation}
where $i,j\in \{0,1\}$ with $i\ne j$. 

\begin{definition}\label{MildSol02}
We say that $z_i:[0,\tau]\to X^{\frac12}$ is a mild solution of \eqref{AuxProb1}  if $z_i\in C([0,\tau],X^{\frac12})$  and $z_i(t)$ satisfies the variation of constants formula
\[
z_i(t)=e^{-(A+\lambda_0I)t}z^i_0+\dfrac{b_0}{a_1}\int_0^t e^{-(A+\lambda_0I)(t-\xi)} f_0(z_i(\xi)) d\xi.
\]
\end{definition}

It is well know that the semilinear parabolic problem \eqref{AuxProb1} is globally well posed in the sense of Definition \ref{MildSol02}   and if $[0,\infty)\ni t\mapsto z_i(t,z_0)\in H_0^m(\Omega)$ 
denotes the solutions of  \eqref{AuxProb1}. 
We define the semigroups $\{T_0(t);t\geqslant0\}$ and $\{T_1(t);t\geqslant0\}$ by $T_i(t)z^i_0=z_i(t,z^i_0)$ for $i\in\{0,1\}$, see \cite{CLR}.

Back to the problem \eqref{Eqqqqq}, in general we can write the problem \eqref{Eqqqqq} in the abstract form on $L^2(\Omega)$
\begin{equation}\label{Ewqs}
\dfrac{du}{dt}+\varLambda u=g(t,u)
\end{equation}
where 

\noindent [H1] $\varLambda:D(\varLambda)\subset L^2(\Omega)\to L^2(\Omega)$ is the unbounded linear operator defined by 
\[
D(\varLambda)=\{u\in H^{2m}(\Omega); B_0u=\ldots=B_{m-1}u=0\ \mbox{on}\ \partial\Omega\}
\]
 and for each $u\in D(\varLambda)$
 \[
 \varLambda u=(A+\lambda_0I)u. 
 \]
 Let $\rho(\varLambda)$ be the resolvent set of $\varLambda$, it is clear that $(0,\infty)\subset \rho(-\varLambda)$ and that for each element $u_0\in L^2(\Omega)$ with $u_0\geqslant 0$ we have $(\lambda I+\varLambda)^{-1}u_0\geqslant 0$ for any $\lambda>0$. We express this fact by saying that $-\varLambda$ has positive resolvent.
\medskip

\noindent [H2] Consider $g:\mathbb{R}\times H_0^m(\Omega)\to L^2(\Omega)$ a function such that for all $r>0$ we can find $\gamma(r)>0$ such that for all $t\in[t_0,t_1]$, the function $\gamma I_{L^2(\Omega)}+g(t,u)$ is positive for all $u\in\mathcal{C}\cap B^{H_0^m(\Omega)}_r(0)$, where $B^{H_0^m(\Omega)}_r(0)=\{u\in H_0^m(\Omega): \|u\|_{H^{m}_0(\Omega)}<r\}$.

\begin{definition}\label{MildSol03}
We say that $u:[t_0,t_0+\tau]\to X^{\frac12}$ is a mild solution of \eqref{Ewqs}   if $u\in C([t_0,t_0+\tau],X^{\frac12})$ and $u(t)$ satisfies the variation of constants formula
\[
u(t)=e^{-\varLambda (t-t_0)}u_s+\int_{t_0}^t e^{-\varLambda(t-\xi)} g(\xi,u(\xi)) d\xi.
\]
\end{definition}

From now on, we assume that  for $R>0$ we can find $\gamma=\gamma(R)>0$ such that if $|u|\leqslant R$ and $t\in[s,t_0]$ then
\begin{equation}\label{SdfR}
0\leqslant \gamma u+b_0f_0(u)\leqslant \gamma u+g(t,u)\leqslant \gamma u+b_1f_1(u)
\end{equation}
with $\gamma u+b_0f_0(u)$ and $\gamma u+b_1f_1(u)$ increasing. 

Denote by $u_g(t,t_0,u_0)$ the solution of \eqref{Ewqs} in the sense of Definition \ref{MildSol03} for $t\geqslant t_0$  for which the solution is defined. The following theorem provides a comparison result, see \cite[Theorem 6.41]{CLR}.

\begin{theorem}\label{TheoCLR}
If $\varLambda$ is as above and $g,h$, and $\ell$ are functions that satisfies hypothesis [H2]. Then, we have the following.
\begin{enumerate}
\item[(i)]  If for every $r>0$ there is a constant $\gamma=\gamma(r)>0$ such that $\gamma I_{L^2(\Omega)}+g(t,\cdot)$ is increasing in $B^{H_0^m(\Omega)}_r(0)$, for all $t\in[t_0,t_1]$ and $u_0,u_1\in H_0^m(\Omega)$ with $u_0\geqslant u_1$, then 
$$u_g(t,t_0,u_0)\geqslant u_g(t,t_0,u_1)$$ as long as both solutions exist;

\item[(ii)] If $g(t,\cdot)\geqslant \ell(t,\cdot)$ for all $t\in\mathbb{R}$ and $u_0\in H_0^m(\Omega)$, then 
$$u_g(t,t_0,u_0)\geqslant u_\ell(t,t_0,u_0)$$ as long as both solutions exist;

\item[(iii)] If $g,\ell$ are such that for every $r>0$ there is a constant $\gamma=\gamma(r)>0$ and an increasing function $h(t,\cdot)$ such that for every $t\in[t_0,t_1]$ 
\[
g(t,\cdot)+\gamma I_{L^2(\Omega)}\geqslant h(t,\cdot)\geqslant \ell(t,\cdot)+\gamma I_{L^2(\Omega)}
\]
in $B^{H_0^m(\Omega)}_r(0)$ and $u_0,u_1\in H_0^m(\Omega)$ with $u_0\geqslant u_1$, then  
$$u_g(t,t_0,u_0)\geqslant u_g(t,t_0,u_1)$$ as long as both solutions exist.
\end{enumerate}
\end{theorem}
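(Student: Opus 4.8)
The plan is to reduce everything to two ingredients: the order-preserving property of the linear semigroup generated by $-(\varLambda+\gamma I)$, and a monotone Picard iteration adapted to the variation of constants formula of Definition \ref{MildSol03}. First I would upgrade the positive-resolvent hypothesis [H1] into positivity of the semigroup. Since $(0,\infty)\subset\rho(-\varLambda)$ and $(\lambda I+\varLambda)^{-1}$ maps the cone $\mathcal{C}$ into itself for every $\lambda>0$, the same holds for $(\lambda I+\varLambda+\gamma I)^{-1}$ for any fixed $\gamma\geqslant0$, and hence for all its powers. Using the Euler exponential formula $e^{-(\varLambda+\gamma I)t}=\lim_{n\to\infty}\big(\tfrac{n}{t}\big)^n\big(\tfrac{n}{t}I+\varLambda+\gamma I\big)^{-n}$, valid for sectorial operators, I conclude that $e^{-(\varLambda+\gamma I)t}$ leaves $\mathcal{C}$ invariant, i.e. it is order preserving. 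This is the abstract maximum principle that replaces pointwise arguments.

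For part (i), given $r>0$ I fix $\gamma=\gamma(r)$ so that $\tilde g(t,\cdot):=\gamma I_{L^2(\Omega)}+g(t,\cdot)$ is increasing on $B^{H_0^m(\Omega)}_r(0)$, and rewrite the equation as $\partial_t u+(\varLambda+\gamma I)u=\tilde g(t,u)$, so that the variation of constants formula reads $u(t)=e^{-(\varLambda+\gamma I)(t-t_0)}u_0+\int_{t_0}^t e^{-(\varLambda+\gamma I)(t-\xi)}\tilde g(\xi,u(\xi))\,d\xi$. On a small interval $[t_0,t_0+\delta]$ on which both solutions remain in $B^{H_0^m(\Omega)}_r(0)$, I would run the Picard iteration $v^0(t)=e^{-(\varLambda+\gamma I)(t-t_0)}u_0$, $w^0(t)=e^{-(\varLambda+\gamma I)(t-t_0)}u_1$ and the corresponding recursion through $\tilde g$. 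By positivity of the semigroup the base case gives $v^0\geqslant w^0$ (since $u_0\geqslant u_1$), and the inductive step combines monotonicity of $\tilde g$ with positivity of $e^{-(\varLambda+\gamma I)t}$ to propagate $v^n\geqslant w^n$. Since the iteration converges in $C([t_0,t_0+\delta],X^{\frac12})$ to the unique mild solutions, the order relation passes to the limit, giving $u_g(t,t_0,u_0)\geqslant u_g(t,t_0,u_1)$ on $[t_0,t_0+\delta]$; a continuation argument along the maximal interval of existence then removes the smallness of $\delta$.

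For parts (ii) and (iii) I would keep the same comparison scheme but feed different vector fields into the two Picard sequences. For (ii), $v^{n+1}$ is built from $\tilde g$ and $w^{n+1}$ from $\tilde\ell:=\gamma I_{L^2(\Omega)}+\ell(t,\cdot)$ with the same initial datum, and the inductive step uses the chain $\tilde g(\xi,v^n)\geqslant\tilde g(\xi,w^n)\geqslant\tilde\ell(\xi,w^n)$, where the first inequality is monotonicity of $\tilde g$ applied to $v^n\geqslant w^n$ and the second is the hypothesis $g\geqslant\ell$; positivity of the semigroup then yields $v^{n+1}\geqslant w^{n+1}$, and the limit gives $u_g\geqslant u_\ell$. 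Part (iii) is the same mechanism run twice, with the intermediate increasing $h$ interpolating the sandwich $g+\gamma I_{L^2(\Omega)}\geqslant h\geqslant\ell+\gamma I_{L^2(\Omega)}$, so that monotonicity is only required of $h$.

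The main obstacle I expect is not the algebra of the induction but the bookkeeping that keeps every iterate inside the ball $B^{H_0^m(\Omega)}_r(0)$ on which [H2], \eqref{SdfR} and the monotonicity of $\tilde g$ are available, together with the uniform convergence of the Picard scheme on a common time interval. Concretely, the delicate step is choosing $\delta$ and $r$ so that all $v^n,w^n$ stay in $B^{H_0^m(\Omega)}_r(0)$; this relies on the smoothing estimate $\|e^{-(\varLambda+\gamma I)t}\|_{\mathcal{L}(L^2(\Omega),X^{\frac12})}\leqslant Ct^{-\frac12}$ combined with the growth bound \eqref{CCondf1}, and then on patching the local comparison across the maximal interval, where one must check that the solutions do not leave the region of validity before the comparison can be re-initialised.
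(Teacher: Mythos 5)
The paper itself contains no proof of this theorem: it is imported from the literature, with the citation \cite[Theorem 6.41]{CLR} standing in place of an argument. So the comparison is really between your reconstruction and the proof in the cited book, and on that score your outline follows the standard (and essentially the same) route: the positive-resolvent hypothesis [H1] is upgraded to positivity of the semigroup $e^{-(\varLambda+\gamma I)t}$ via the exponential formula, and the order relations are then propagated through the variation of constants formula of Definition \ref{MildSol03} by a monotone Picard iteration, with closedness of the cone in $H_0^m(\Omega)$ letting the inequalities pass to the limit and a re-initialisation argument extending them along the maximal interval of existence. Your identification of the delicate points (keeping all iterates in the ball $B^{H_0^m(\Omega)}_r(0)$ where the monotonicity is available, and uniformity of the time interval of convergence) is apt.

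One caveat deserves flagging. In part (ii) your induction uses the chain $\tilde g(\xi,v^n)\geqslant \tilde g(\xi,w^n)\geqslant \tilde\ell(\xi,w^n)$, whose first inequality invokes monotonicity of $\tilde g=\gamma I_{L^2(\Omega)}+g(t,\cdot)$. That hypothesis is not among those stated in item (ii), which assumes only [H2] and $g\geqslant \ell$. This is not something one can argue around: without quasimonotonicity of $g$ or of $\ell$, the conclusion of (ii) is false in general in the vector-valued or infinite-dimensional setting (comparable but non-quasimonotone nonlinearities need not produce ordered solutions), so some such hypothesis must be present; the statement as printed in the paper is itself imprecise on this point. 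In the source, and in the only way the theorem is used here (the proof of Theorem \ref{FirstTheo} routes every comparison through an increasing function as in item (iii), cf. \eqref{SdfR}), the required monotonicity is indeed available, but your write-up should state explicitly that (ii) is proved under that additional assumption rather than use it silently. Note also that the printed conclusion of item (iii), $u_g(t,t_0,u_0)\geqslant u_g(t,t_0,u_1)$, is evidently a typo for the comparison between $u_g$ and $u_\ell$, which is what your two-sided sandwich through the increasing $h$ actually establishes.
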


Consequently, we have the following result on the existence of the nonlinear evolution process of solutions associated with the problem  \eqref{Eqqqqq}-\eqref{Condd02}.

\begin{theorem}\label{FirstTheo}
Under the hypotheses in Theorem \ref{TheoCLR}, if $u_0\leqslant u_1\leqslant u_2$ then for each $t\geqslant s\in\mathbb{R}$
\[
T_1(t-s)u_0\leqslant S(t,s)u_1\leqslant T_0(t-s)u_2,
\]
where $T_i(t-s)$ is associated to the problem  \eqref{AuxProb1}, and $S(t,s)$ is associated to the problem \eqref{Eqqqqq}-\eqref{Condd02}
\end{theorem}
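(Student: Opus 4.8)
The plan is to exhibit the three families $\{T_1(t-s)\}$, $\{S(t,s)\}$ and $\{T_0(t-s)\}$ as evolution processes for the abstract problem \eqref{Ewqs}, all sharing the same positive-resolvent linear part $\varLambda=A+\lambda_0 I$ but carrying three different nonlinearities, and then to apply the comparison Theorem \ref{TheoCLR} twice: once for the lower estimate and once for the upper estimate. Writing $g(t,u)=\frac{1}{a(\int_\Omega((A+\lambda_0 I)u)u\,dx,\,t)}\,f(x,\phi^{-1}(t),u)$ for the right-hand side of \eqref{Eqqqqq}, and letting $\ell$ and $h$ denote the (autonomous) right-hand sides of the two auxiliary problems \eqref{AuxProb1} that generate $T_1$ and $T_0$, the solution operators read, in the notation of Theorem \ref{TheoCLR}, as $S(t,s)u=u_g(t,s,u)$, $T_1(t-s)u=u_\ell(t,s,u)$ and $T_0(t-s)u=u_h(t,s,u)$; here the autonomy of $\ell$ and $h$ is exactly what turns the processes $u_\ell,u_h$ into the semigroups $T_1,T_0$ evaluated at $t-s$.

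First I would check that each of $g,\ell,h$ satisfies hypotheses [H1] and [H2]. Hypothesis [H1] is immediate since $\varLambda$ has positive resolvent, as recorded above. For [H2], and for the monotonicity of $\gamma I_{L^2(\Omega)}+g(t,\cdot)$, $\gamma I_{L^2(\Omega)}+\ell$ and $\gamma I_{L^2(\Omega)}+h$ on balls $B_r^{H_0^m(\Omega)}(0)$, I would invoke the standing assumption \eqref{SdfR} together with the continuous differentiability of $f_0,f_1$ and the local Lipschitz bounds \eqref{CCondf13333f_i} and \eqref{CCondf1}: after adding $\gamma u$, all three nonlinearities become nonnegative and increasing on the relevant bounded sets, which is precisely what [H2] requires.

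The crux is the pointwise ordering $\ell(u)\leqslant g(t,u)\leqslant h(u)$. Combining the two-sided growth bound $b_0 f_0(u)\leqslant f(x,t,u)\leqslant b_1 f_1(u)$ with $a_0\leqslant a(\cdot,t)\leqslant a_1$ yields, on the region where $f\geqslant 0$, the estimates $\frac{b_0}{a_1}f_0(u)\leqslant \frac{1}{a}f\leqslant \frac{b_1}{a_0}f_1(u)$, with the signs suitably adjusted where $f$ is negative; this is exactly the content encoded in \eqref{SdfR}, and the resulting bounds are the right-hand sides of the auxiliary problems \eqref{AuxProb1} defining $T_1$ and $T_0$. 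I expect this reconciliation of the division by $a$ with the sign of $f$ — so that the ordering is genuinely monotone in the ordered space $H_0^m(\Omega)$ — to be the main obstacle, and it is where \eqref{SdfR} does the essential work.

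Finally I would chain parts (i) and (ii) of Theorem \ref{TheoCLR}. For the lower estimate, $g\geqslant \ell$ and part (ii) give $u_\ell(t,s,u_0)\leqslant u_g(t,s,u_0)$, while $u_0\leqslant u_1$ and the monotonicity of $\gamma I_{L^2(\Omega)}+g(t,\cdot)$ together with part (i) give $u_g(t,s,u_0)\leqslant u_g(t,s,u_1)$; composing yields $T_1(t-s)u_0\leqslant S(t,s)u_1$. Symmetrically, $u_1\leqslant u_2$ with part (i) gives $u_g(t,s,u_1)\leqslant u_g(t,s,u_2)$, and $g\leqslant h$ with part (ii) gives $u_g(t,s,u_2)\leqslant u_h(t,s,u_2)$, so $S(t,s)u_1\leqslant T_0(t-s)u_2$. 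Since the solutions are globally defined in $H_0^m(\Omega)$, both chains hold for all $t\geqslant s$, completing the proof.
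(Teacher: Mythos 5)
Your overall strategy---realizing $T_1$, $S$, $T_0$ as the processes $u_\ell$, $u_g$, $u_h$ for the abstract problem \eqref{Ewqs} and feeding the sandwich \eqref{SdfR} into Theorem \ref{TheoCLR}---is the same as the paper's, but your chaining of the items contains a genuine gap. In both of your chains the monotonicity step, part (i), is applied to $g$ itself: you need $\gamma I_{L^2(\Omega)}+g(t,\cdot)$ to be increasing on balls $B^{H_0^m(\Omega)}_r(0)$ in order to conclude $u_g(t,s,u_0)\leqslant u_g(t,s,u_1)$. This is not among the standing hypotheses---\eqref{SdfR} only asserts that the two \emph{bounding} functions $\gamma u+b_0f_0(u)$ and $\gamma u+b_1f_1(u)$ are increasing---and it is in fact false in general, precisely because of the nonlocal coefficient. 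Indeed, take $u\leqslant v$ in a ball with $u(x_0)=v(x_0)$ at some point $x_0$; writing $I[w]=\int_\Omega ((A+\lambda_0I)w)w\,dx$, one has
\[
\bigl(\gamma v+g(t,v)\bigr)(x_0)-\bigl(\gamma u+g(t,u)\bigr)(x_0)
= f\bigl(x_0,\phi^{-1}(t),u(x_0)\bigr)\left(\frac{1}{a(I[v],t)}-\frac{1}{a(I[u],t)}\right),
\]
and since the quadratic form $I[\cdot]$ is not monotone with respect to the pointwise order, this difference has no sign; no choice of $\gamma$ can absorb it, because the would-be corrector $\gamma(v-u)$ vanishes at $x_0$. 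So your claim that ``after adding $\gamma u$, all three nonlinearities become nonnegative and increasing'' is exactly the claim that fails for $g$. (A related minor slip: [H2] asks only for \emph{positivity} of $\gamma I_{L^2(\Omega)}+g(t,\cdot)$ on $\mathcal{C}\cap B^{H_0^m(\Omega)}_r(0)$; increasingness is the separate hypothesis of item (i), and conflating the two is what hid the gap.)

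The repair is to route the monotonicity through the \emph{local} (Nemytskii) nonlinearities rather than through $g$, and this is what the paper does: it applies part (iii) of Theorem \ref{TheoCLR} twice, taking the increasing intermediate function to be $b_1f_1+\gamma I$ (resp.\ $b_0f_0+\gamma I$), with the nonlocal $g$ appearing only as an outer member of the sandwich, where no monotonicity is required. (Note that the conclusion of item (iii) as printed, $u_g(t,t_0,u_0)\geqslant u_g(t,t_0,u_1)$, is evidently a typo for $u_g(t,t_0,u_0)\geqslant u_\ell(t,t_0,u_1)$; that corrected reading, which is the content of [CLR, Theorem 6.41], is exactly the tool designed for non-monotone outer nonlinearities.) Equivalently, you could keep your (i)+(ii) scheme but reverse the order of the two steps: for the lower bound, apply part (i) to $\ell$ (available, since $\gamma u+b_0f_0(u)$ is increasing) to get $u_\ell(t,s,u_0)\leqslant u_\ell(t,s,u_1)$, then part (ii) with $\ell\leqslant g$ to get $u_\ell(t,s,u_1)\leqslant u_g(t,s,u_1)$; symmetrically for the upper bound, apply part (ii) first and then part (i) to $h$. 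With that reordering, the only monotonicity ever invoked is that of the local comparison functions, which \eqref{SdfR} supplies, and the rest of your argument (global existence, passage from processes to the semigroups $T_i$ evaluated at $t-s$) goes through.
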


\begin{proof} 
Observe that  for $R>0$ we can find $\gamma=\gamma(R)>0$ such that if $|u|\leqslant R$ and $t\in[s,t_0]$ then
\begin{equation}\label{SdfR}
0\leqslant \gamma u+b_0f_0(u)\leqslant \gamma u+g(t,u)\leqslant \gamma u+b_1f_1(u)
\end{equation}
with $\gamma u+b_0f_0(u)$ and $\gamma u+b_1f_1(u)$ increasing. 

We now apply Theorem \ref{TheoCLR} twice to compare solutions of \eqref{Eqqqqq} and \eqref{AuxProb1} with $i=0,1$. To that end, we define
\[
\ell_1(t,u)(x)=\dfrac{1}{a\Big(\int_\Omega ((A+\lambda_0)u)udx,t\Big)}f(\phi^{-1}(t),x,u),
\]
\[
h_1(t,u)(x)=g_1(t,u)(x)=b_1f_1(u),
\]
\[
g_2(t,u)(x)=\dfrac{1}{a\Big(\int_\Omega   ((A+\lambda_0)u)udx,t\Big)}f(\phi^{-1}(t),x,u),
\]
\[
h_2(t,u)(x)=\ell_2(t,u)(x)=b_0f_0(u).
\]

Now, noticing that $H_0^m(\Omega)$ is embedded in $L^\infty(\Omega)$ and using \eqref{SdfR}, we are ready to apply Theorem \ref{TheoCLR}, item $(iii)$ twice to compare solutions of \eqref{Eqqqqq} and \eqref{AuxProb1} with $i=0,1$.  
\end{proof}

\subsection{Pullback attractor}

In this subsection we prove that the evolution process $\{S(t,s);t\geqslant s\in\mathbb{R}\}$ defined by  \eqref{Eqqqqq} admits a pullback attractor in $H_0^m(\Omega)$. 

%
%
%
%
%

\medskip

In order to describe the non-autonomous problem pursued in this work we will need to introduce some terminology.

\begin{definition}\label{adsaded3}
Let $Z$ be a Banach space and denote $C(Z)$ the space of continuous functions from $Z$ into $Z$. Let $\{T(t,s);t\geqslant s\in\mathbb{R}\}$ (or $T(\cdot,\cdot)$ ) be a nonlinear evolution process in $C(Z)$.  A positive global solution $\xi$ of $T(\cdot,\cdot)$ is called a {\bf non-autonomous equilibrium} if the zeros of $\xi(t)$ are the same for all $t\in\mathbb{R}$ and $\xi$ is non-degenerate as $t\to\pm\infty$.
\end{definition}

Observe that under the hypotheses mentioned in previous sections is well know that we can find a nontrivial non-negative equilibrium $\phi_{0}^+$ for \eqref{AuxProb1} with $i=0$, and a positive equilibrium $\phi_{1}^+$ for \eqref{AuxProb1} with $i=1$.

Using  Theorem \ref{FirstTheo} and the fact that $T_1(\cdot)$ is a gradient semigroup, we have that
\[
\phi_{1}^+=T_1(t)\phi_{1}^+\leqslant T_0(t)\phi_{1}^+\xrightarrow[t\to+\infty]\ \psi
\]
for some positive equilibrium $\psi$ of \eqref{AuxProb1} with $i=1$.

By the uniqueness of the positive equilibrium for \eqref{AuxProb1} with $i=1$, we conclude that $\psi=\phi_0^+$.

Therefore $\phi_1^+\leqslant \phi_0^+$. Define the set
\[
X_1^+=\{u\in H_0^m(\Omega);\ \phi_1^+(x)\leqslant u(x)\leqslant \phi_0^+ (x)\}.
\]

Now, our idea is to construct a positive non-autonomous equilibrium for \eqref{Eq01Local}, see Definition \ref{adsaded3}. For that, we will prove that $X_1^+$ is positively invariant, that is, $S(t,s)X_1^+\subset X_1^+$, for all $t\geqslant s\in\mathbb{R}$.
%
%
%
For $x\in\Omega$ and $u_0\in X_1^+$ since $T_i(t-s)\phi_i^+=\phi_i^+$, $i=0,1$, we have
\[
\phi_1^+(x)\leqslant T_0(t-s)u_0\leqslant S(t,s)u_0\leqslant T_1(t-s)u_0\leqslant \phi_0^+(x)
\]

If $u(t,s,u_0)(x)=S(t,s)u_0(x)$, since $u_0$ is constant  in $\Omega$, then  we conclude that $u(t,s,u_0)(x)$ is constant in $\Omega$.

\begin{theorem} 
The evolution process $S(\cdot,\cdot)$ restricted to $X_1^+$ admits a pullback attractor. In particular, there exists a non-autonomous equilibrium in $\mathcal{C}$.
\end{theorem}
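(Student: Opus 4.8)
The plan is to split the statement into two parts: first the existence of a pullback attractor for $S(\cdot,\cdot)$ restricted to the invariant order interval $X_1^+=[\phi_1^+,\phi_0^+]$, and then the extraction of a non-autonomous equilibrium from the monotone structure. For the pullback attractor I would invoke the abstract existence theory for nonlinear evolution processes from \cite{CLR}, which requires a pullback absorbing family together with pullback asymptotic compactness. The positive invariance $S(t,s)X_1^+\subset X_1^+$ established just above the statement shows that the closed bounded set $X_1^+\subset H_0^m(\Omega)$ is itself a pullback absorbing family for the restricted process, so only compactness remains to be verified. For this I would exploit parabolic smoothing: since $\varLambda=A+\lambda_0 I$ is sectorial in $L^2(\Omega)$ with spectrum the sequence of eigenvalues $\mu_n\to\infty$, it has compact resolvent and the analytic semigroup $e^{-\varLambda t}$ is compact for $t>0$. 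Combining this with the variation of constants formula of Definition \ref{MildSol01} and the subcritical growth \eqref{CCondf1} of $f$, I would show that $S(t,s)$ maps $X_1^+$ into a relatively compact subset of $H_0^m(\Omega)$ for every $t>s$, giving pullback asymptotic compactness and hence a pullback attractor $\{\mathcal{A}(t)\}_{t\in\mathbb{R}}$ with $\mathcal{A}(t)\subset X_1^+$ for all $t$.

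To produce the non-autonomous equilibrium I would use that, by Theorem \ref{TheoCLR}(i), the restricted process is order-preserving on $[\phi_1^+,\phi_0^+]$, and pull back from the upper end by setting
\[
\xi(t)=\lim_{s\to-\infty}S(t,s)\phi_0^+ .
\]
The key point is that this limit exists monotonically: for $s_1<s_2$ the invariance gives $S(s_2,s_1)\phi_0^+\in X_1^+$, hence $S(s_2,s_1)\phi_0^+\leqslant\phi_0^+$, and order preservation then yields $S(t,s_1)\phi_0^+\leqslant S(t,s_2)\phi_0^+$; thus $s\mapsto S(t,s)\phi_0^+$ is monotone and bounded below by $\phi_1^+$, so the pullback limit is well defined. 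Continuity of $S(t,r)$ on $X_1^+$ together with the cocycle identity $S(t,r)S(r,s)\phi_0^+=S(t,s)\phi_0^+$ shows that $\xi$ is a complete bounded trajectory lying in $X_1^+$, that is, a global solution of $S(\cdot,\cdot)$.

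Finally I would verify the three requirements of Definition \ref{adsaded3}. Since $\phi_1^+\leqslant\xi(t)\leqslant\phi_0^+$ and $\phi_1^+$ is a positive equilibrium, $\xi$ is a positive global solution in $\mathcal{C}$; its interior positivity is inherited from $\phi_1^+>0$ (via the strong positivity associated with the regular elliptic boundary value problem), so the zero set of $\xi(t)$ is exactly $\partial\Omega$ and is therefore independent of $t$, while the lower bound $\xi(t)\geqslant\phi_1^+$ furnishes non-degeneracy as $t\to\pm\infty$. The step I expect to be the main obstacle is the rigorous justification of pullback asymptotic compactness in the presence of the nonlocal, non-autonomous structure: one must control the coefficient $a(\int_\Omega((A+\lambda_0 I)w)w\,dx,t)^{-1}$ and the time change $\phi^{-1}$ uniformly along the pullback limit, so that the smoothing estimate yields compactness with bounds that are uniform as $s\to-\infty$; the monotonicity from Theorem \ref{TheoCLR}(i) and the invariance of $X_1^+$ must be combined carefully to close this argument.
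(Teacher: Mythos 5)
Your proposal is correct in outline but takes a genuinely different route from the paper. The paper's proof is a three-line restriction argument: it takes as given the pullback attractor already asserted for $S(\cdot,\cdot)$ on all of $H_0^m(\Omega)$, observes that the positive invariance of $X_1^+$ (established just before the statement) then yields a pullback attractor for the restricted process, and concludes by noting that \emph{any} bounded global solution inside this restricted attractor is automatically a non-autonomous equilibrium, since it is trapped between $\phi_1^+$ and $\phi_0^+$, which fixes its zero set and gives non-degeneracy as $t\to\pm\infty$. You instead rebuild the attractor from scratch (absorbing family plus pullback asymptotic compactness via parabolic smoothing and compact resolvent) and then construct an explicit equilibrium as the monotone pullback limit $\xi(t)=\lim_{s\to-\infty}S(t,s)\phi_0^+$, using the order preservation of Theorem \ref{TheoCLR}. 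Your construction buys more: it produces a specific (indeed maximal) non-autonomous equilibrium in $X_1^+$ and does not lean on the characterization of pullback attractors by bounded global solutions, nor on the unproved global-attractor assertion; the paper's argument is shorter precisely because it outsources both of these facts. Two technical slips on your side, both repairable: the order interval $X_1^+$ is closed and bounded in $L^\infty(\Omega)$, hence in $L^2(\Omega)$, but it is \emph{not} bounded in $H_0^m(\Omega)$ (pointwise bounds do not control derivatives), so the absorbing set must be taken as $S(t,s)X_1^+$ for $t>s$ after one smoothing step rather than $X_1^+$ itself; and the monotone limit defining $\xi(t)$ is a priori only a pointwise a.e.\ limit, which must be upgraded to convergence in $H_0^m(\Omega)$ by combining it with the precompactness of $\{S(t,s)\phi_0^+ : s\leqslant t-1\}$ — the compactness you establish in your first part, so the argument closes.
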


\begin{proof}
The invariance follows from the reasoning that preceded the theorem. The fact that $S(\cdot,\cdot)$ has a pullback attractor in $H_0^m(\Omega)$ ensures that it also has a pullback attractor when restricted to $X_1^+$. Now, any global solution in the pullback attractor of $S(\cdot,\cdot)$ restricted to $X_1^+$ is a non-autonomous equilibria.
\end{proof}

\section{The autonomous equation}\label{AutEquat}

In this section  we consider \eqref{Eq01Local} with $a$ and $f$ independent of time. Namely, we consider the autonomous problem
\begin{equation}\label{Eq01LocalSemt}
\partial_tu+a\Big(\int_\Omega ((A+\lambda_0I)u)udx \Big)(A+\lambda_0I)u=f(x,u),
\end{equation}
for any $x\in\Omega$, $t>0$, subject to initial-boundary conditions given in \eqref{Condd01}.

An equilibrium solution of the problem \eqref{Eq01LocalSemt} is an solution independent of time; namely, is a solution of the 
elliptical problem
\begin{equation}\label{Eq01Lmt}
a\Big(\int_\Omega ((A+\lambda_0I)u)udx \Big)(A+\lambda_0I)u=f(x,u).
\end{equation}

From now on we necessity clarify that:
\begin{definition}
We say that $u\in H_0^m(\Omega)$ is a weak solution of the problem \eqref{Eq01Lmt} if 
\begin{equation}\label{EqquatNonlocal}
a\Big(\int_\Omega ((A+\lambda_0I)u)(x)u(x)dx \Big)\int_\Omega (A+\lambda_0I) u(x)v(x)dx=\int_\Omega f(x,u(x))v(x)dx,
\end{equation}
for all $v\in H_0^m(\Omega)$.
\end{definition}

From now on  we study the existence of a positive, nontrivial and  equilibria to the autonomous problem \eqref{Eq01LocalSemt} (namely, a weak solution of \eqref{Eq01Lmt} in the sense of the above definition) by minimizing the energy if we consider $a:[0,+\infty)\to\mathbb{R}$ is a continuous function such that
\begin{equation}\label{Eaaaat}
0<\sigma\leqslant a(t)
\end{equation}
for any $t\in\mathbb{R}$.

Let 
\[
F(x,u)=\int_0^uf(x,\theta)d\theta
\]
be the primitive of $f$. We assume that for some $R>0$
\begin{equation}\label{EDswa}
0<\mu F(x,u)\leqslant f(x,u)u,
\end{equation}
 for any $|u|\geqslant R$.

Since $a$ is continuous and $f$ has subcritical growth, the nonlinear functional $E:H_0^m(\Omega)\to\mathbb{R}$ defined by
\begin{equation}\label{Eq01LocalEEESemt}
E(u)=\frac{1}{2}\int_0^{\int_\Omega ((A+\lambda_0I)u)udx} a(s)ds-\int_\Omega F(x,u(x))dx
\end{equation}
is of class $C^1$ in $H_0^m(\Omega)$; namely, $E$ is a Lyapunov function for \eqref{Eq01LocalSemt}, and as a matter of fact, by combining the growth of $a$ and $f$, we can easily obtain existence results of nontrivial weak solution of the problem \eqref{Eq01Lmt} by minimization arguments, see e.g. \cite{ACMa}. Hence, the nonlinear semigroup associated with \eqref{Eq01LocalSemt}-\eqref{Condd01} on $H_0^m(\Omega)$  is gradient.

Note that a solution of \eqref{EqquatNonlocal} can be founded as critical point of the energy functional $E: H_0^m(\Omega)\to\mathbb{R}$ defined by \eqref{Eq01LocalEEESemt}.

Note that 
\[
\begin{array}{ll}
\langle E'(u),v \rangle&\displaystyle = a\Big(\int_\Omega ((A+\lambda_0I)u)(x)u(x)dx \Big)\int_\Omega((A+\lambda_0I)u)(x)v(x)dx
\smallskip\\
&\displaystyle -\int_\Omega f(x,u(x))v(x)dx,
\end{array}
\] 
for all $u,v\in H_0^m(\Omega)$.

First, we can prove that $E$ is a coercive functional on $H_0^m(\Omega)$ as a consequence of \eqref{CCCCCondf2}, \eqref{EDswa} and
\[
\begin{split}
E(u)&\geqslant \dfrac{\sigma}{2}\int_\Omega ((A+\lambda_0I)u)(x)u(x)dx+\dfrac{1}{\mu}\int_\Omega ( C_0|u|^2-C_1 )dx\\
&\geqslant \dfrac{\sigma \,C_1}{2}\|u\|^2_{H^{m}(\Omega)}+\Big(\dfrac{C_0}{\mu}-\dfrac{\sigma \,C_2}{2}\Big)\|u\|_{L^2(\Omega)}^2-\dfrac{C_1}{\mu}|\Omega|\\
&\geqslant C_\mu\|u\|^2_{H^{m}(\Omega)}-\dfrac{C_1}{\mu}|\Omega|,
\end{split}
\]
for some $C_\mu>0$, where $\mu>0$ sufficiently large. Also, $E$ is weakly lower semicontinuous on $H_0^m(\Omega)$ thanks to the Lebesgue’s Theorem on dominated convergence. Thus we have the following result.

From now on, we assume that 
\[
a(0)\mu_1<c_0,
\]
where $c_0>0$ is such that 
\[
F(x,u)\geqslant -c_0|u|^2 + \zeta(u).
\]

\begin{theorem}
Suppose that the application $t \mapsto a(t)$ is only continuous and non-decreasing and \eqref{Eaaaat} holds. Moreover, assume that  $\overline{u}\in H^m_0(\Omega)$ is a supersolution to problem \eqref{Eq01Lmt}; that is, $\overline{u}\geqslant0$ on $\partial\Omega$ and for any $\varphi\in C_0^\infty(\Omega)$ with  $\varphi\geqslant0$ we have
\[
\begin{array}{ll}
a\Big(&\displaystyle\int_\Omega ((A+\lambda_0I)\overline{u})(x)\overline{u}(x)dx \Big)\int_\Omega((A+\lambda_0I)\overline{u})(x)\varphi(x)dx
\\
&\displaystyle-\int_\Omega f(x,\overline{u}(x))\varphi(x)dx\geqslant0.
\end{array}
\]
 and assume that with constant $\overline{c}\in\mathbb{R}$ there holds $0\leqslant\overline{u}\leqslant \overline{c}<\infty$, almost everywhere in $\Omega$. Then there exists a weak solution $u\in H^m_0(\Omega)$ of \eqref{Eq01Lmt}, satisfying the condition $0\leqslant u\leqslant \overline{u}$ almost everywhere in $\Omega$. In other words,  there exists a  positive and non-trivial weak solution of the problem \eqref{Eq01Lmt}.
\end{theorem}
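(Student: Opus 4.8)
The plan is to obtain $u$ as a \emph{constrained minimiser} of the energy $E$ in \eqref{Eq01LocalEEESemt} over the order interval cut out by the supersolution $\overline u$, and then to upgrade this constrained minimiser to a genuine weak solution of \eqref{Eq01Lmt} by exploiting that $0$ is a subsolution and $\overline u$ a supersolution. First I would set
\[
\mathcal M=\{v\in H_0^m(\Omega): 0\leqslant v(x)\leqslant \overline u(x)\ \text{a.e. in }\Omega\}.
\]
Since both pointwise inequalities are preserved under convex combinations and under a.e. limits of $L^2$-convergent sequences, $\mathcal M$ is convex and strongly closed, hence weakly closed, and $\mathcal M\neq\emptyset$ because $\overline u\in\mathcal M$. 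As recorded after \eqref{Eq01LocalEEESemt}, $E$ is coercive and weakly lower semicontinuous on $H_0^m(\Omega)$, so a minimising sequence in $\mathcal M$ is bounded, has a weakly convergent subsequence whose limit $u$ lies in $\mathcal M$, and $u$ realises $\min_{\mathcal M}E$. By construction $0\leqslant u\leqslant\overline u$, so $u\in\mathcal C$.

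Next I would show that $u$ solves \eqref{Eq01Lmt}. As a minimiser over the convex set $\mathcal M$, $u$ satisfies the variational inequality $\langle E'(u),v-u\rangle\geqslant0$ for every $v\in\mathcal M$. Fixing $\varphi\in H_0^m(\Omega)$ and $\epsilon>0$, the natural test element is the truncation $v_\epsilon=\min\{\overline u,\max\{0,u+\epsilon\varphi\}\}\in\mathcal M$, which differs from $u+\epsilon\varphi$ by the excess above $\overline u$ and the defect below $0$; inserting $v_\epsilon$, dividing by $\epsilon$ and letting $\epsilon\to0^{+}$, the supersolution inequality for $\overline u$ controls the excess term and the subsolution inequality at $0$ controls the defect term, leaving $\langle E'(u),\varphi\rangle=0$. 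The only genuinely nonlocal complication is that the coefficient $a$ is evaluated at $u$ rather than at $\overline u$; but on the support of the excess term $u$ is within $O(\epsilon)$ of $\overline u$ in $L^2$, so by continuity and monotonicity of $a$ together with the local Lipschitz bound on $f$ the mismatch between $a(\int_\Omega((A+\lambda_0I)u)u)$ and $a(\int_\Omega((A+\lambda_0I)\overline u)\overline u)$ contributes only $o(1)$.

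The main obstacle, and the reason the classical second-order argument does not transfer verbatim, is that for $m>1$ the truncations $(u+\epsilon\varphi-\overline u)^{+}$ and $(u+\epsilon\varphi)^{-}$ need not belong to $H_0^m(\Omega)$, so the pointwise maximum-principle manipulations are unavailable. This is exactly where I would invoke hypothesis \textbf{[H1]}, namely that $-(A+\lambda_0I)$ has positive resolvent, together with the comparison machinery of Theorem \ref{TheoCLR} and Theorem \ref{FirstTheo}: the order relations $0\leqslant u\leqslant\overline u$ are propagated through the positivity-preserving resolvents rather than through naive truncation. Concretely I would rewrite the promotion step at the level of the resolvent identity $u=(K+a(\cdot)(A+\lambda_0I))^{-1}(Ku+f(\cdot,u))$, choosing $K>0$ so large that $s\mapsto Ks+f(x,s)$ is nondecreasing on $[0,\overline c]$, so that the comparison of $u$ with $\overline u$ and with $0$ becomes a consequence of resolvent positivity.

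Finally, for non-triviality I would test the energy along the first eigendirection. Taking $\varphi_1>0$ the eigenfunction of $A+\lambda_0I$ associated with $\mu_1$, normalised in $L^2(\Omega)$, and $t>0$ small enough that $t\varphi_1\in\mathcal M$, the hypothesis $a(0)\mu_1<c_0$ together with the lower bound $F(x,u)\geqslant -c_0|u|^2+\zeta(u)$ and $\tfrac12\int_0^{t^2\mu_1}a(s)\,ds=\tfrac12 a(0)\mu_1 t^2+o(t^2)$ is precisely what forces $E(t\varphi_1)<0=E(0)$ for small $t$; hence $\min_{\mathcal M}E<0$ and $u\not\equiv0$. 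Strict positivity $u>0$ in $\Omega$ then follows from $u\geqslant0$, $u\not\equiv0$ and the positivity of the resolvent in \textbf{[H1]}, which here substitutes for the strong maximum principle of the $2m$-th order operator.
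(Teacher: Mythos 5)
Your skeleton coincides with the paper's own proof: both minimize the energy $E$ from \eqref{Eq01LocalEEESemt} over the order interval $\mathcal{M}=\{v\in H_0^m(\Omega):\ 0\leqslant v\leqslant\overline{u}\ \text{a.e.}\}$, both try to promote the constrained minimizer to a critical point by testing with the truncation $v_\epsilon=\min\{\overline{u},\max\{0,u+\epsilon\varphi\}\}$ and using the supersolution inequality for $\overline{u}$, and both get non-triviality by evaluating $E$ along $\delta\phi$, with $\phi$ the first eigenfunction and $a(0)\mu_1<c_0$. Where you diverge is at the promotion step, and there your diagnosis is actually sharper than the paper: for $m>1$ the truncations $\varphi^\epsilon=\max\{0,u+\epsilon\varphi-\overline{u}\}$ and $\varphi_\epsilon=\max\{0,-(u+\epsilon\varphi)\}$ need not belong to $H_0^m(\Omega)$ (positive parts of $H^m$ functions generically lose regularity across the contact set when $m\geqslant 2$), whereas the paper simply asserts $\varphi^\epsilon,\varphi_\epsilon\in H_0^m(\Omega)\cap L^\infty(\Omega)$ and proceeds. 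You have correctly located the weakest joint of the published argument.

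However, your proposed repair does not close the gap. The identity $u=(K+a(\cdot)(A+\lambda_0I))^{-1}(Ku+f(\cdot,u))$ cannot serve as a ``rewritten promotion step'': it is equivalent to $u$ being a solution of \eqref{Eq01Lmt}, which is exactly what has to be proved, so invoking it is circular. Resolvent positivity ([H1]) and the comparison results of Theorems \ref{TheoCLR} and \ref{FirstTheo} propagate order relations along solutions of the parabolic problem; they say nothing about whether a minimizer of $E$ over the convex set $\mathcal{M}$ satisfies the Euler--Lagrange equation, which is a purely variational assertion. If instead you intend to abandon minimization and run a monotone iteration $u_{n+1}=(K+a_n(A+\lambda_0I))^{-1}(Ku_n+f(\cdot,u_n))$ starting from $\overline{u}$, two obstructions remain unaddressed: the nonlocal factor $a_n=a\big(\int_\Omega((A+\lambda_0I)u_n)u_n\,dx\big)$ destroys the monotonicity of the iteration map (ordering $u_{n+1}\leqslant u_n$ does not order the coefficients $a_n$ in a useful direction), and, even granting [H1], positivity of the resolvent yields only $u\geqslant0$, never the strict positivity you claim in your last sentence --- for $2m$-th order operators with $m>1$ there is no strong maximum principle available to upgrade $u\geqslant0$, $u\not\equiv0$ to $u>0$, and plain resolvent positivity is not a substitute. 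So, as written, your proposal proves neither that the minimizer solves \eqref{Eq01Lmt} nor the asserted positivity; its real value is that it exposes a step in the paper's own proof that genuinely needs justification for $m>1$.
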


\begin{proof}
Since we are interested in positive and  weak solutions for the problem  \eqref{Eq01Lmt}, we restrict the domain of $E$ to 
\[
\mathcal{M}=\{u\in H_0^m(\Omega); 0\leqslant u\leqslant \overline{u}\ \mbox{almost everywhere in}\ \Omega\}.
\]
Here, $u_C\equiv C>0$ is a (weak) supersolution of  \eqref{Eq01Lmt}; namely, 

Clearly $\mathcal{M}$ is weakly closed (closed and convex), that is $\mathcal{M}$ and $H_0^m(\Omega)$ satisfies all the conditions of the \cite[Theorem 1.2]{MS}. Moreover, $F(x,u(x))$ is bounded for all $u\in\mathcal{M}$ and almost every $x\in\Omega$. Then we infer the existence of the relative minimizers $u\in\mathcal{M}$. To show that $u$ is a weak solution of    \eqref{Eq01Lmt}, we consider $\varphi\in C_0^\infty(\Omega)$ and $\epsilon>0$. Let 
\[
v_\epsilon=\min\{\overline{u},\max\{0,u+\epsilon\varphi\}\}=u+\epsilon\varphi-\varphi^\epsilon+\varphi_\epsilon\in \mathcal{M},
\]
where
\[
\varphi^\epsilon=\max\{0,u+\epsilon\varphi-\overline{u}\}\geqslant0,
\]
and
\[
\varphi_\epsilon=\max\{0,-(u+\epsilon\varphi)\}\geqslant0,
\]
we note that $\varphi^\epsilon,\varphi_\epsilon\in H_0^m(\Omega)\cap L^\infty(\Omega)$.

The functional $E$ is differentiable in direction $v_\epsilon-u$.  Since $u$ minimizes $E$ in $\mathcal{M}$ we have
\[
0\leqslant \langle E'(u), v_\epsilon-u\rangle=\epsilon \langle E'(u), \varphi\rangle - \langle E'(u), \varphi^\epsilon\rangle +\langle E'(u), \varphi_\epsilon\rangle, 
\]
so that 
\[
 \langle E'(u), \varphi\rangle\geqslant \dfrac{1}{\epsilon} [ \langle E'(u), \varphi^\epsilon\rangle -\langle E'(u), \varphi_\epsilon\rangle] 
\]

Now, since $\overline{u}$ is a supersolution to \eqref{Eq01Lmt}, we have
\[
\begin{split}
\langle E'(u), \varphi^\epsilon\rangle&=\langle E'(\overline{u}), \varphi^\epsilon\rangle+\langle E'(u)-E'(\overline{u}), \varphi^\epsilon\rangle\\
&\geqslant \langle E'(u)-E'(\overline{u}), \varphi^\epsilon\rangle\\
&\geqslant \langle E'(u-\overline{u}), \varphi^\epsilon\rangle,
\end{split}
\]
and so
\[
\begin{array}{l}
\langle E'(u), \varphi^\epsilon\rangle
\smallskip\\
\displaystyle =a\Big(\int_\Omega ((A+\lambda_0I)(u-\overline{u}))(u-\overline{u})dx\Big)\int_{\Omega_\epsilon} ((A+\lambda_0I)(u-\overline{u})(u+\epsilon\varphi-\overline{u}) dx
\smallskip\\
\displaystyle-\int_{\Omega_\epsilon}  \left(f(x,u)-f(x,\overline{u})\right)\varphi^\epsilon dx
\smallskip\\
\displaystyle=a\Big(\int_\Omega ((A+\lambda_0I)(u-\overline{u}))(u-\overline{u})dx\Big)\int_{\Omega_\epsilon} ((A+\lambda_0I)(u-\overline{u})(u+\epsilon\varphi-\overline{u}) dx
\smallskip\\
\displaystyle-\int_{\Omega_\epsilon}  \left(f(x,u)-f(x,\overline{u})\right)(u+\epsilon\varphi-\overline{u}) dx
\end{array}
\]
that is,
\[
\begin{array}{l}
\langle E'(u), \varphi^\epsilon\rangle
\smallskip\\
\displaystyle\geqslant a\Big(\int_\Omega ((A+\lambda_0I)(u-\overline{u}))(u-\overline{u})dx\Big)\int_{\Omega_\epsilon} ((A+\lambda_0I)(u-\overline{u})(u+\epsilon\varphi-\overline{u}) dx
\smallskip\\
\displaystyle-\epsilon\int_{\Omega_\epsilon}  |f(x,u)-f(x,\overline{u})||\varphi| dx
\end{array}
\]

Consequently
\[
\begin{array}{l}
\langle E'(u), \varphi^\epsilon\rangle
\smallskip\\
\displaystyle\geqslant \epsilon a\Big(\int_\Omega ((A+\lambda_0I)(u-\overline{u}))(u-\overline{u})dx\Big) \int_{\Omega_\epsilon} ((A+\lambda_0I)(u-\overline{u})(u+\epsilon\varphi-\overline{u}) dx
\smallskip\\
\displaystyle-\epsilon\int_{\Omega_\epsilon}  |f(x,u)-f(x,\overline{u})||\varphi| dx
\smallskip\\
\displaystyle\geqslant \epsilon a_0\int_{\Omega_\epsilon} ((A+\lambda_0I)(u-\overline{u})(u+\epsilon\varphi-\overline{u}) dx-\epsilon\int_{\Omega_\epsilon}  |f(,u)-f(,\overline{u})||\varphi| dx
\smallskip\\
\displaystyle\geqslant \epsilon a_0\int_{\Omega_\epsilon} ((A+\lambda_0I)(u-\overline{u})(u+\epsilon\varphi-\overline{u}) dx
\smallskip\\
\displaystyle
-\epsilon\|\varphi\|_{L^\infty(\Omega)}\int_{\Omega_\epsilon}  |f(x,u)-f(x,\overline{u})| dx
\end{array}
\]
where $\Omega^\epsilon=\{x\in\Omega; u(x)+\epsilon\varphi(x)\geqslant \overline{u}(x)>u(x)\}$, that satisfies $|\Omega^\epsilon|\to0$ as $\epsilon\to0$. Here, $|\Omega^\epsilon|$ denotes the Lebesgue’s measure of $\Omega^\epsilon$.  Hence by absolute continuity of the Lebesgue integral we obtain that
\[
\langle E'(u), \varphi^\epsilon\rangle\geqslant o(\epsilon),
\]
where $o(\epsilon)/\epsilon\to 0$ as $\epsilon\to0$.  Similarly
\[
\langle E'(u), \varphi_\epsilon\rangle\leqslant o(\epsilon),
\]
whence
\[
\langle E'(u), \varphi\rangle\geqslant 0
\]
for all $\varphi\in C_0^\infty(\Omega)$. Reversing the sign of $\varphi$ and since $ C_0^\infty(\Omega)$ is dense in $H^m_0(\Omega)$ we finally see that $E'(u)=0$, as claimed.

Now we will to show that $u$ is a non-trivial. For that, we will show that the minimum of energy is negative what it guarantees that $u$ could not be zero.  In fact, let $\phi$ be the eigenfunction associated to the first eigenvalue $\mu_1$ of the operator $A$ on $H_0^m(\Omega)$; that is, $\phi$ is is solution to the following eigenvalue problem
\[
(A+\lambda_0I)\phi=\mu_1\phi\ \ \mbox{in}\ \ \Omega.
\]

Since that $a(0)\mu_1<c_0$ from continuity of the function $a$, we have $a(t)\mu_1< c_0$ for each $t\in\Big[0,\,\mu_1\delta^2\int_\Omega|\phi|^2dx\Big]$ for some $\delta>0$ small enough. Note that $\delta\phi\in\mathcal{M}$, thus
\[
\begin{split}
E(\delta\phi)&=\dfrac{1}{2}\int_0^{\int_\Omega ((A+\lambda_0I)\delta\phi)\delta\phi dx} a(s)ds-\int_\Omega F(x,\delta\phi(x))dx\\
&=\dfrac{1}{2}a(c_\delta)\mu_1\delta^2 \int_\Omega|\phi|^2dx   -\int_\Omega F(x,\delta\phi(x))dx\\
\end{split}
\]
for some $c_\delta\in\Big[0,\,\mu_1\delta^2\int_\Omega|\phi|^2dx\Big]$, and consequently
\[
\begin{split}
E(\delta\phi)&\leqslant \dfrac{\delta^2}{2}\Big[[a(c_\delta)\mu_1-c_0]  \int_\Omega|\phi|^2dx +c_1\zeta(\phi)\Big]\\
&<0.
\end{split}
\]

Thus
\[
E(u_0)=E_\infty=\inf_{u\in H_0^m(\Omega)}E(u)\leqslant E(\delta\phi)<0
\]
for $\delta$.
\end{proof}

%
%

\end{document}